\declarecommand\Grel[1]{\normalfont(#1$'$)}
\title{Smallest nonabelian quotients of surface braid groups}
  \renewcommand{\ref}{\cref}
\begin{document}

\begin{abstract}
   We give a sharp lower bound on the size of nonabelian quotients of the surface braid group $B_n(\Sigma_g)$ and classify all quotients that attain the lower bound: Depending on $n$ and $g$, a quotient of minimum order is either a symmetric group or a 2-step nilpotent $p$-group.
\end{abstract}

\section{Introduction}

The Artin braid group $B_n$ arises as the fundamental group of $\UConf_n(\DD)$, the configuration space of $n$ distinct unordered points on the open disk $\DD$. One can generalize this construction to define for an oriented, closed genus $g$ surface $\Sigma_g$ the \emph{surface braid groups}
$$B_n(\Sigma_g) = \pi_1(\UConf_n(\Sigma_g)).$$

It was shown by Kolay \cite{kolay_smallest_2021} that for $n = 3$ or $n \geq 5$, the smallest noncyclic finite quotient of $B_n$ is the symmetric group $S_n$, in the sense that $S_n$ has minimum order amongst noncyclic quotients of $B_n$ and $S_n$ is the unique noncyclic quotient of $B_n$ of minimum order.

In this paper we consider the analogous question for surface braid groups. With our main result we show that whilst $S_n$ is a quotient of $B_n(\Sigma_g)$, it is not generally the smallest nonabelian quotient, in contrast to the regular braid groups. The new minimal nonabelian quotients that arise are 2-step nilpotent $p$-groups which will be defined in \ref{standard-jn2}.

For $g \geq 1$ there is an embedding $B_n \hookrightarrow B_n(\Sigma_g)$ \cite{birman_braid_1969}. By a \emph{braid-reduced} quotient of $B_n(\Sigma_g)$ we mean a finite quotient with $B_n$ having cyclic image. Our main result is the following theorem.

\begin{theorem}[Smallest nonabelian quotients of $B_n(\Sigma_g)$]\label{big-theorem}
    Let $n \geq 5$ and $g \geq 1$. Suppose that $G$ is a finite nonabelian quotient of $B_n(\Sigma_g)$.
    \begin{parts}
    \item \label{big-theorem-non-braid-reduced}
        If $G$ is not braid-reduced then $|G| \geq n!$ with equality if and only if $G \iso S_n$.
    \item \label{big-theorem-braid-reduced}
        If $G$ is braid-reduced then $G$ is 2-step nilpotent and $|G| \geq p^{2g+j}$, where $p$ is the smallest prime dividing $g+n-1$ and $j = 1$ or $2$ according to whether $p$ is odd or 2 respectively. Equality occurs if and only if either $G \iso \I(p^j,g)$ or $G \iso \II(p^j,g)$ (these two groups are nonisomorphic 2-step nilpotent $p$-groups defined in \ref{standard-jn2}).
    \end{parts}
    In particular the smallest non-nilpotent quotient of $B_n(\Sigma_g)$ is $S_n$.
\end{theorem}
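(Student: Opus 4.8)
The plan is to split into the two cases of the theorem according to the image of the embedded subgroup $B_n\hookrightarrow B_n(\Sigma_g)$ from \cite{birman_braid_1969}, which by definition is cyclic exactly when $G$ is braid-reduced. One preliminary observation links the cases and settles the closing sentence immediately: since $B_n^{\mathrm{ab}}\iso\mathbb{Z}$ is cyclic and a nilpotent group with cyclic abelianization is cyclic, every nilpotent quotient of $B_n$ is cyclic. Hence a noncyclic image of $B_n$ is non-nilpotent, and as subgroups of nilpotent groups are nilpotent, any non-braid-reduced $G$ is non-nilpotent; conversely \ref{big-theorem-braid-reduced} shows braid-reduced quotients are nilpotent. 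So being non-nilpotent coincides with being non-braid-reduced, and combined with \ref{big-theorem-non-braid-reduced} and the non-nilpotency of $S_n$ this gives the final clause.

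For \ref{big-theorem-non-braid-reduced}, write $\overline{B_n}$ for the image of $B_n$ in $G$. If $G$ is not braid-reduced then $\overline{B_n}$ is a noncyclic finite quotient of $B_n$, so Kolay's theorem \cite{kolay_smallest_2021} (valid for $n\geq 5$) gives $|\overline{B_n}|\geq n!$ with equality only for $\overline{B_n}\iso S_n$; since $\overline{B_n}\leq G$ we get $|G|\geq n!$. Equality forces $\overline{B_n}=G\iso S_n$, i.e. that $B_n$ already surjects onto $G$. Conversely the permutation homomorphism $B_n(\Sigma_g)\to S_n$ with $\sigma_i\mapsto(i\ i{+}1)$ is a nonabelian quotient of order $n!$ in which $B_n$ surjects onto $S_n$, hence is not braid-reduced; so the bound is attained precisely by $S_n$.

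The substance is \ref{big-theorem-braid-reduced}. Using a presentation of $B_n(\Sigma_g)$ with Artin generators $\sigma_1,\dots,\sigma_{n-1}$ and handle generators $a_1,b_1,\dots,a_g,b_g$, I first note that in a braid-reduced $G$ the mutually conjugate $\sigma_i$ have a common image $s$, and that (as $n\geq 3$, so some $\sigma_i$ with $i\geq 2$ commutes with all handle generators) $s$ is central. Substituting $\sigma_i=s$ into the mixed and surface relations, I would show distinct handles commute and every generator-commutator lies in the central subgroup generated by $s$ and one commutator class, so $[G,G]\leq Z(G)$ and $G$ is 2-step nilpotent. The commutator then descends to an alternating pairing on $G^{\mathrm{ab}}$ restricting on the handle generators to the intersection form of $\Sigma_g$, pairing $a_r$ with $b_r$. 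The main obstacle is the arithmetic: computing $H_1(B_n(\Sigma_g))\iso\mathbb{Z}^{2g}\oplus\mathbb{Z}/(2g+2n-2)$ and tracing, through the surface relation, that the common commutator value generating $[G,G]$ has order dividing $g+n-1$. Extracting this divisibility cleanly from the presentation is where the real work lies, and the factor of two relating $g+n-1$ to the order $2g+2n-2$ of $s$ in $G^{\mathrm{ab}}$ is precisely what makes the prime $2$ exceptional below.

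Granting this structure, the bound and classification reduce to $p$-group bookkeeping. Nonabelianity makes the pairing nonzero, so $|[G,G]|>1$ and is divisible by some prime $p\mid g+n-1$; a minimal such $G$ is then a $p$-group. The $2g$ handle generators span a nondegenerate symplectic space and contribute a factor $p^{2g}$. For odd $p$ the squaring map causes no obstruction and $[G,G]\iso\mathbb{Z}/p$ supplies the remaining $p^{1}$, giving $|G|\geq p^{2g+1}$; for $p=2$ the quadratic refinement of an alternating (as opposed to merely antisymmetric) form over $\mathbb{F}_2$ forces an extra central $\mathbb{Z}/2$, giving $|G|\geq 2^{2g+2}$, i.e. $j$ as stated. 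At equality $G$ is a 2-step nilpotent $p$-group carrying a rank-$2g$ nondegenerate symplectic form with cyclic center of order $p^{j}$, and sorting such groups by exponent yields exactly the two types $\I(p^j,g)$ and $\II(p^j,g)$ of \ref{standard-jn2}. Sharpness is then checked by building surjections $B_n(\Sigma_g)\to\I(p^j,g)$ and $B_n(\Sigma_g)\to\II(p^j,g)$ via $\sigma_i\mapsto s$, $a_r\mapsto\alpha_r$, $b_r\mapsto\beta_r$; the surface relation is respected exactly because $p\mid g+n-1$, completing the equality cases.
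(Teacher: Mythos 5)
Your high-level plan coincides with the paper's (Kolay's theorem applied to the image of $B_n$ for the non-braid-reduced case; reducing Bellingeri's presentation, extracting 2-step nilpotency and a symplectic commutator form, then classifying the minimal groups and exhibiting surjections for sharpness), and your part \ref{big-theorem-non-braid-reduced} and the closing non-nilpotency clause are correct. But two steps of part \ref{big-theorem-braid-reduced}, as you propose to carry them out, would fail. First, the arithmetic step: your computation of $H_1$ is wrong --- relation (R4) abelianizes to $2\sigma = 0$, so $H_1(B_n(\Sigma_g)) \iso \ZZ^{2g}\oplus\ZZ/2\ZZ$, not $\ZZ^{2g}\oplus\ZZ/(2g+2n-2)$ --- and, more to the point, the divisibility $p \mid (g+n-1)$ is invisible in any abelian quotient, because the left-hand side of (TR) is a product of commutators and dies there. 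It is genuinely nonabelian information: using \Grel{R1} and \Grel{R4} one computes $[a_r, b_r^{-1}] = \sigma^{-2}$, so (TR) collapses to $\sigma^{-2g} = \sigma^{2(n-1)}$, i.e.\ $\sigma^{2(g+n-1)} = 1$ (relation \Grel{TR} of \ref{br-gen-set}); since $G' = \ang{\sigma^2}$, this gives $|G'| \mid (g+n-1)$. This short substitution is exactly the step you defer as ``where the real work lies,'' so it is a genuine gap in the proposal, and the $H_1$ route you point to cannot fill it.

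Second, the $p=2$ case: your justification is false as stated. Nothing about alternating forms and their quadratic refinements over $\FF_2$ forces an extra central $\ZZ/2\ZZ$: the extraspecial groups of order $2^{2g+1}$, for instance $D_8^{\odot g}$, are 2-step nilpotent with cyclic center of order exactly $2$ and a nondegenerate rank-$2g$ alternating commutator form on $G/ZG$, so groups of the size you claim is impossible do exist. What excludes them as braid-reduced quotients is the presentation itself: by \Grel{R1} the element $\sigma$ is central, and by \Grel{R4} its square generates $G'$, so when $p=2$ the element $\sigma$ has order $4$ and the cyclic center must have order at least $4$ --- this is the paper's claim 1 ($p^j \neq 2$), and it is a statement about the relations, not about forms over $\FF_2$. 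Relatedly, your equality analysis asserts that $ZG$ is cyclic and that the handle images span an $\FF_p$-vector space; both facts are the JN2 characterization (\ref{jn2-characterization}) and require the minimality hypothesis (every proper quotient of a minimal nonabelian braid-reduced quotient is again braid-reduced, hence abelian), which your sketch uses only to get a $p$-group. With those two repairs, the rest of your outline (the basis/dimension count, Newman's classification into $\I(p^j,g)$ and $\II(p^j,g)$, and the explicit surjections realizing equality) is the paper's argument.
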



Note that \ref{big-theorem} implies the following qualitative result.

\begin{corollary}\ 
    \begin{parts}
    \item
        Fix $g \geq 1$. For all sufficiently large $n$, the smallest nonabelian quotients of $B_n(\Sigma_g)$ are 2-step nilpotent $p$-groups (in particular, the smallest nonabelian quotient is not $S_n$). 
    \item
        Fix $n \geq 5$. For all sufficiently large $g$, the smallest nonabelian quotient of $B_n(\Sigma_g)$ is $S_n$. Also, there exists a (small) $g$ for which this is not true.
    \end{parts}
\end{corollary}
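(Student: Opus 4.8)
The plan is to read off from \ref{big-theorem} that the minimum order of a nonabelian quotient of $B_n(\Sigma_g)$ is $\min(n!,\,p^{2g+j})$, where $p$ is the smallest prime dividing $g+n-1$ and $j\in\{1,2\}$ is as in the theorem, and then to compare the two competing quantities $n!$ and $p^{2g+j}$ in each limiting regime. The theorem tells us that whenever $n! < p^{2g+j}$ every minimal nonabelian quotient is non-braid-reduced, hence $\iso S_n$, while whenever $p^{2g+j} < n!$ every minimal nonabelian quotient is braid-reduced, hence isomorphic to one of the 2-step nilpotent $p$-groups $\I(p^j,g)$ or $\II(p^j,g)$. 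So I would reduce both parts to elementary growth estimates.

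For part (a) I would fix $g$ and let $n\to\infty$. The key point is that $p$ divides $g+n-1$, so $p\le g+n-1$, while the exponent satisfies $2g+j\le 2g+2$, a constant depending only on the fixed $g$; hence $p^{2g+j}\le(g+n-1)^{2g+2}$ is bounded by a fixed polynomial in $n$. Since $n!$ grows faster than any fixed polynomial, $p^{2g+j}<n!$ for all sufficiently large $n$, so the minimal nonabelian quotients are exactly the groups $\I(p^j,g),\II(p^j,g)$ and in particular are not $S_n$.

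For part (b) I would fix $n\ge 5$ and let $g\to\infty$. Here $n!$ is constant while $p\ge 2$ and $j\ge 1$ force $p^{2g+j}\ge 2^{2g+1}\to\infty$, so $p^{2g+j}>n!$ for all large $g$ and the unique minimal nonabelian quotient is $S_n$. For the final claim I would exhibit a small $g$ making $g+n-1$ even: take $g=1$ when $n$ is even and $g=2$ when $n$ is odd. Then $p=2$, $j=2$, and $p^{2g+j}=2^{2g+2}\le 2^{6}=64<120\le n!$, producing a nonabelian quotient strictly smaller than $S_n$, so the smallest nonabelian quotient is not $S_n$ for this $g$.

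The whole argument is essentially immediate from \ref{big-theorem}; the only place requiring mild care is the worst-case estimate in part (a), where one must avoid the sharper bound $p\le\sqrt{g+n-1}$ (valid only when $g+n-1$ is composite, whereas $g+n-1$ can be prime) and instead use the crude $p\le g+n-1$, which still suffices because the exponent $2g+j$ is held fixed.
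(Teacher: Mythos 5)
Your proof is correct and is exactly the intended argument: the paper offers no explicit proof of this corollary, treating it as an immediate consequence of the main theorem, and your comparison of $n!$ against $p^{2g+j}$ (polynomial in $n$ for fixed $g$ via the crude bound $p\le g+n-1$; at least $2^{2g+1}$ for fixed $n$; and the explicit witnesses $g=1$ or $g=2$ making $g+n-1$ even) fills in precisely the elementary estimates the authors had in mind.
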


\newpage
\begin{remarks}[Smaller cases]
\item
    If $n = 1,2,3,4$ and $g \geq 1$ (with the exception of $(n,g) = (1,1)$ where $B_n(\Sigma_g) = \pi_1(T^2) = \ZZ^2$ is abelian) then the symmetric group $S_3$ is the smallest nonabelian quotient of $B_n(\Sigma_g)$.
\item
    If $g = 0$ then $B_n(\Sigma_g)$ is the spherical braid group $B_n(S^2)$ which is an intermediate quotient of the map $B_n \to S_n$ \cite{fadell_braid_1961}. It follows from the result of Kolay \cite{kolay_smallest_2021} that the smallest quotient of $B_n(S^2)$ is $S_n$ for $n \geq 5$ and $S_3$ for $n = 3,4$. For $n = 1$ and $2$ we note that $B_n(S^2)$ is abelian.
\end{remarks}

From \ref{big-theorem} we obtain partial confirmation of a conjecture of Chen \cite[Conjecture 1.3]{chen_surjective_2019}:

\begin{corollary}\label{no-surjections}
    Let $n \geq 5$ and $m \geq 3$, and let $g,h \geq 0$. If $n > m$ then there are no surjective homomorphisms $$B_n(\Sigma_g) \to B_m(\Sigma_h).$$
\end{corollary}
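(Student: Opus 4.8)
The plan is to turn the hypothetical surjection into a small non-nilpotent quotient of $B_n(\Sigma_g)$ and contradict the order bound in \ref{big-theorem}. Suppose toward a contradiction that $\phi\colon B_n(\Sigma_g)\to B_m(\Sigma_h)$ is surjective. Every surface braid group admits a natural surjection onto the symmetric group recording the permutation induced on the $m$ marked points: this comes from the connected $S_m$-cover $\mathrm{Conf}_m(\Sigma_h)\to\UConf_m(\Sigma_h)$ and exists for all $h\geq 0$ and $m\geq 2$. Composing, I obtain a surjection $\psi\colon B_n(\Sigma_g)\to S_m$. Since $m\geq 3$, the group $S_m$ is nonabelian, so $S_m$ is realized as a nonabelian quotient of $B_n(\Sigma_g)$ of order $m!<n!$.

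I would first treat the main case $g\geq 1$ using \ref{big-theorem}. The decisive point — and essentially the only thing to check — is that $S_m$ is \emph{not} nilpotent for any $m\geq 3$. By the final assertion of \ref{big-theorem}, the smallest non-nilpotent quotient of $B_n(\Sigma_g)$ is $S_n$, so every non-nilpotent quotient has order at least $n!$; applied to $S_m$ this gives $m!\geq n!$, contradicting $n>m$. Equivalently, one can argue straight from the two parts: part \ref{big-theorem-braid-reduced} shows that a braid-reduced quotient is $2$-step nilpotent and hence cannot be the non-nilpotent group $S_m$, so $\psi$ must fail to be braid-reduced; then part \ref{big-theorem-non-braid-reduced} forces $|S_m|\geq n!$, the same contradiction. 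This is where care is needed: without the non-nilpotence of $S_m$ one could not exclude the much smaller braid-reduced lower bound $p^{2g+j}$, which may well be below $m!$; the non-nilpotence of $S_m$ for $m\geq 3$ is exactly what rules this out.

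It then remains to handle the boundary case $g=0$, where \ref{big-theorem} does not apply. Here $B_n(\Sigma_0)=B_n(S^2)$ is the spherical braid group, itself a quotient of $B_n$, and by the second Remark above (via Kolay's theorem) every nonabelian quotient of $B_n(S^2)$ has order at least $n!$ for $n\geq 5$. Applying this to the nonabelian quotient $S_m$ constructed as before again yields $m!\geq n!$, a contradiction. Since $h$ enters the argument only through the existence of the surjection $B_m(\Sigma_h)\to S_m$, both cases cover all $h\geq 0$, and no genuine obstacle beyond the non-nilpotence observation arises; the corollary is a clean consequence of the order estimate in \ref{big-theorem}.
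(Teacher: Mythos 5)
Your argument is correct and is essentially the paper's own proof: compose the hypothetical surjection with $B_m(\Sigma_h)\to S_m$, observe that $S_m$ is non-nilpotent for $m\geq 3$, and invoke the final assertion of \ref{big-theorem} to force $m\geq n$. Your explicit treatment of the $g=0$ case via Kolay's result for $B_n(S^2)$ is a small extra care the paper's one-line proof leaves implicit, but it is not a different route.
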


\paragraph{Proof method.} 
\ref{big-theorem-non-braid-reduced} follows from Kolay: By mapping a braid to its permutation on points, $S_n$ is a finite quotient of $B_n(\Sigma_g)$. If $B_n \to B_n(\Sigma_g) \to G$ has noncyclic image then $|G| \geq n!$ with the bound attained only by $G \iso S_n$.

The primary contribution of this paper is \ref{big-theorem-braid-reduced}, which considers the braid-reduced quotients. We utilize a presentation of $B_n(\Sigma_g)$ (\ref{presentation}) due to Bellingeri \cite{bellingeri_presentations_2004} and assume that $B_n$ has cyclic image to reduce the relations and conclude that a braid-reduced quotient $G$ must be nilpotent. If we further assume that $G$ is a nonabelian braid-reduced quotient of minimum order then $G$ belongs to a class of nilpotent groups called JN2 groups (\ref{def:jn2}) which were classified by Newman in 1960 \cite{newman_class_1960}. It then suffices to find the smallest JN2 groups which can be realized as a quotient of $B_n(\Sigma_g)$, a straightforward task given the concrete nature of Newman's classification.

\ref{sec:jn2} provides a self-contained exposition of the classification of JN2 groups. In \ref{sec:proof} we prove \ref{big-theorem-braid-reduced}, as well as \ref{no-surjections}.

\paragraph{Acknowledgements.} I am grateful to my advisor Benson Farb for continued support throughout this project and for detailed comments on many revisions of this paper, as well as for suggesting this problem in the first place. I thank Peter Huxford for useful discussions about braid groups and small $p$-groups, and for many helpful suggestions during the editing process. I also thank Dan Margalit for taking the time to read and comment on an earlier draft.

\section{Just 2-step nilpotent groups}\label{sec:jn2}

In this section we introduce and classify JN2 groups, a class of nilpotent groups which includes all minimal nonabelian braid-reduced quotients of $B_n(\Sigma_g)$.

\begin{definition}\label{def:jn2}
    A group $G$ is \i{just 2-step nilpotent} (JN2) if $G$ is 2-step nilpotent (in particular, nonabelian) and every proper quotient of $G$ is abelian.
\end{definition}

Finite JN2 groups admit a complete and explicit classification due to Newman \cite{newman_class_1960}: Any finite JN2 group can be assigned a unique class $(p^j,m)$ where $p$ is a prime and $j$ and $m$ are positive integers; up to isomorphism, there are precisely two JN2 groups of a given class $(p^j,m)$. We will state and prove this classification theorem in \ref{jn2-classification}, following the general ideas of \cite{newman_class_1960}.

All JN2 groups will hereafter be assumed to be finite. The following proposition will allow us to define the class $(p^j,m)$ of a JN2 group.

\begin{proposition}[Characterization of JN2 groups {\cite[Theorem 1]{newman_class_1960}}] \label{jn2-characterization}
    A finite group $G$ is JN2 if and only if there exists a prime $p$ such that
    \begin{parts}
    \item $G' \defeq [G,G]$ is cyclic of order $p$,
    \item the center $ZG$ is cyclic of order a power of $p$, and
    \item $G/ZG$ is elementary abelian of exponent $p$. \label{jn2-characterization-gzg}
    \end{parts}
    In particular, a JN2 group is a $p$-group.
\end{proposition}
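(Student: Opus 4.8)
The plan is to prove both implications by reducing the JN2 hypothesis to a single statement: that $G'$ is contained in every nontrivial normal subgroup of $G$. Indeed, a quotient $G/N$ is abelian precisely when $G' \le N$, so the requirement that every proper quotient of $G$ be abelian says exactly that $G' \le N$ for every normal subgroup $N \ne 1$; equivalently, $G'$ lies inside every nontrivial normal subgroup.

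For the forward direction I would assume $G$ is JN2 and argue as follows. Since $G$ is $2$-step nilpotent, $1 \ne G' \le ZG$, so $ZG \ne 1$. Because $G' \le ZG$, every subgroup of $ZG$ is normal in $G$ and hence contains $G'$ by the reformulation above; thus $G'$ is the unique minimal subgroup of $ZG$. A finite abelian group with a unique minimal subgroup must be a cyclic $p$-group (a prime $q \ne p$ dividing $|ZG|$ would furnish a second minimal subgroup), which gives (b), and its unique minimal subgroup $G'$ then has order $p$, giving (a). For (c) I would use that in a class-$2$ group the commutator is biadditive and satisfies $[x^p,y]=[x,y]^p$; since $|G'|=p$ this is trivial, so $x^p$ centralizes $G$, i.e.\ $x^p \in ZG$ for all $x$. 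Hence $G/ZG$ is abelian of exponent dividing $p$, and being nontrivial (as $G$ is nonabelian) it is elementary abelian. Finally $|G| = |ZG|\cdot|G/ZG|$ is a power of $p$, which is the concluding claim.

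For the converse I would assume (a)--(c). Since $G/ZG$ is abelian we get $G' \le ZG$, and as $G' \ne 1$ this shows $G$ is nonabelian of class exactly $2$; moreover both $ZG$ and $G/ZG$ are $p$-groups, so $G$ is a $p$-group. To see that every proper quotient is abelian it suffices, by the reformulation, to check $G' \le N$ for each nontrivial normal subgroup $N$. Here I would invoke the standard fact that in a $p$-group every nontrivial normal subgroup meets the center, so $N \cap ZG \ne 1$. As $ZG$ is cyclic of $p$-power order it has a unique subgroup of order $p$, which must be $G'$; every nontrivial subgroup of a cyclic $p$-group contains this minimal subgroup, so $G' \le N \cap ZG \le N$, and $G/N$ is abelian.

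The two classification facts I rely on --- that a finite abelian group with a unique minimal subgroup is a cyclic $p$-group, and that a nontrivial normal subgroup of a $p$-group meets the center --- are standard, so the genuine content is the opening move of the forward direction: reading the JN2 condition as \emph{$G'$ is contained in every nontrivial normal subgroup} and then exploiting the centrality $G' \le ZG$ to pin down $G'$ and $ZG$ at once. I do not expect a serious obstacle; the only point needing care is to confirm that $ZG$ and $G/ZG$ are genuinely $p$-groups before applying the cyclic and elementary-abelian dichotomies, which the order-$p$ property of $G'$ together with its centrality duly provide.
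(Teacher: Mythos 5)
Your argument is correct, and the forward direction is essentially the paper's: the same reformulation of JN2 as ``$G'$ lies in every nontrivial normal subgroup,'' the same identification of $G'$ as the unique minimal subgroup of $ZG$ forcing $ZG$ to be a cyclic $p$-group, and the same commutator identity $[x^p,y]=[x,y]^p$ to get exponent $p$ for $G/ZG$. The converse diverges in one ingredient: to show a nontrivial normal $N$ contains $G'$, you invoke the class-equation fact that a nontrivial normal subgroup of a $p$-group meets the center, whereas the paper argues directly that if $G' \not\le N$ then $N \cap G' = 1$, hence $[N,G] \le N \cap G' = 1$, so $N \le ZG$, and then $N$ must be trivial since every nontrivial subgroup of the cyclic $p$-group $ZG$ contains $G'$. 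The paper's version is self-contained and does not need $G$ to be a $p$-group at that point; yours leans on a standard external lemma but reaches the same place. Both are fine; the only stylistic caveat is that your remark ``because $G' \le ZG$, every subgroup of $ZG$ is normal'' slightly misattributes the reason---central subgroups are normal unconditionally---though this does not affect the argument.
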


\begin{proof}\
\begin{proof-iff}
    \pforwards

    Let $G$ be a finite JN2 group. For every nontrivial normal subgroup $N \trianglelefteq G$, we have that $G' \leq N$ since any proper quotient of $G$ is abelian. Since $G$ is 2-step nilpotent, $G' \leq ZG$. Consequently,
    \begin{parts}
    \item
        $G'$ is abelian and admits no proper nontrivial subgroups so $G' \iso \ZZ/p\ZZ$ for some prime $p$.
    \item
        $ZG$ cannot be properly decomposed as a direct sum: Any nontrivial subgroup of $ZG$ contains $G'$ so no two nontrivial subgroups intersect trivially. Since $ZG$ is finite abelian, it must be cyclic of prime power order. The prime must be $p$ because $G' \leq ZG$.
    \item
        $G/ZG$ is abelian because $G' \leq ZG$. For $x,y \in G$, we have that $[x^p,y] = [x,y]^p$ by using the identity $$[xz,y] = z[x,y]z^{-1}[z,y]$$ and noting that $[x,y]$ is central because $G' \leq ZG$. But $G'$ has order $p$, so in fact $[x^p,y] = 1$. Thus $x^p \in ZG$ for all $x \in G$, which is to say that $G / ZG$ has exponent $p$.
    \end{parts}

    \pbackwards
    Suppose $G$ is a finite group satisfying (a), (b), and (c). Then $G' \neq \{1\}$ by (a) and $G' \leq ZG$ by (c) so $G$ is 2-step nilpotent.

    If $N \trianglelefteq G$ is a normal subgroup with $G' \not \leq N$ then $N \cap G' = \{1\}$ by (a). Since $N$ is normal, $[N,G] \leq N \cap G' = \{1\}$ so $N \leq ZG$. But $G' \leq ZG$, and (a) and (b) imply that any nontrivial subgroup of $ZG$ intersects $G'$ nontrivially. Thus $N = \{1\}$. We conclude that every proper quotient of $G$ is abelian. \qedhere
\end{proof-iff}
\end{proof}

An immediate corollary of \ref{jn2-characterization-gzg} is that $V \defeq G/ZG$ has the structure of an $\FF_p$-vector space. Note that vector addition in $V$ is written multiplicatively and scalar multiplication of an element $x \mod ZG \in V$  by a scalar $r \in \FF_p$ is written as
\begin{gather*}
    r \cdot (x \mod ZG) = x^r \mod ZG.
\end{gather*}
Fix a generator $z$ of $ZG$. This fixes a generator $z^{p^{j-1}}$ of $G'$ and hence an identification of $G'$ with $\FF_p$. Define a pairing
\begin{align*}
   V \x V &\to G' = \FF_p \\
  (x \mod ZG, y \mod ZG) &\mapsto [x,y]
\end{align*}
This pairing is a well-defined, bilinear, nondegenerate, alternating form which makes $V$ into a symplectic vector space. In particular, $\dim V$ is even.

Thus associated to each JN2 group $G$ is a \emph{class} $(p^j, m)$ where $|ZG| = p^j$ and $\dim V = 2m$, so $G$ fits into the short exact sequence
\[
    \begin{tikzcd}
        1 \rar & \ZZ/p^j\ZZ \rar & G \rar & (\ZZ/p\ZZ)^{2m} \rar & 0.
    \end{tikzcd}
\]

The symplectic structure on central factor groups $V = G/ZG$ is key to the classification theorem because symplectic automorphisms on central factor groups can  be used to construct isomorphisms between certain JN2 groups of the same class. The following lemma extracts from a JN2 group a normalized symplectic basis on its associated vector space $V$.
\begin{lemma} \label{symplectic-basis-normalized}
    Let $G$ be JN2 of class $(p^j,m)$ where $p^j \neq 2$, with a fixed generator $z$ of $ZG$. Then there exists a symplectic basis $\sB = \{a_i \mod ZG, b_i \mod ZG\}_{i=1}^m$ of $V = G/ZG$ such that the representatives $a_i,b_i \in G$ satisfy either
    \begin{parts}[label=(\Roman*), font=\normalfont]
    \item
        $a_i^p = b_i^p = 1$ for all $i$, or
    \item
        $a_1^p = b_1^p = z$ and $a_i^p = b_i^p = 1$ for $2 \leq i \leq m$.
    \end{parts}
    We will say that $\sB$ is type I or II accordingly.
\end{lemma}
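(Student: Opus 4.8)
The plan is to isolate the ``$p$-th power'' data of $G$ as a single linear functional on $V$, normalize it by symplectic linear algebra, and only then choose representatives that realize the prescribed powers exactly. Since $V = G/ZG$ has exponent $p$ by \ref{jn2-characterization-gzg}, we have $x^p \in ZG$ for every $x \in G$, so there is a well-defined map $q \colon V \to ZG/(ZG)^p$ sending $x \bmod ZG$ to $x^p \bmod (ZG)^p$ (replacing $x$ by $xc$ with $c \in ZG$ multiplies $x^p$ by $c^p \in (ZG)^p$, since $c$ commutes with $x$). Identifying $ZG/(ZG)^p \iso \FF_p$ via $z^s \mapsto s \bmod p$, the first and most important step is to show that $q$ is \emph{linear}. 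For this I would use the 2-step nilpotent power identity $(xy)^p = x^p y^p [y,x]^{\binom{p}{2}}$, valid because $[y,x] \in G' \leq ZG$ is central. When $p$ is odd, $p \mid \binom{p}{2}$ and $G'$ has order $p$, so the commutator term vanishes outright and $x \mapsto x^p$ is already a homomorphism $G \to ZG$. When $p = 2$ (so $j \geq 2$, as $p^j \neq 2$), the term $[y,x]^{\binom{2}{2}} = [y,x]$ does not vanish on its own, but here $G' = \langle z^{2^{j-1}}\rangle \leq \langle z^2\rangle = (ZG)^2$ precisely because $j \geq 2$, so it dies modulo $(ZG)^2$ and $q$ is linear after all. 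This is exactly where the hypothesis $p^j \neq 2$ enters, and I expect the careful $p = 2$ bookkeeping to be the main obstacle, since for $p^j = 2$ the commutator term survives and $q$ becomes a genuine quadratic form rather than a functional.

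Next I would establish a realization lemma: for $v \in V$ one can choose a representative $x$ with $x^p = 1$ if $q(v) = 0$ and $x^p = z$ if $q(v) = 1$. Writing $x^p = z^s$, the identity $q(v) = s \bmod p$ combined with the adjustment $x \mapsto x z^{-k}$, which multiplies $x^p$ by $z^{-pk}$ for both parities of $p$, lets me solve $s - pk = 0$ or $1$ in $\ZZ/p^j\ZZ$ and hence hit the desired power exactly. Crucially this adjustment leaves $x \bmod ZG$ and every commutator unchanged, so the representatives of different basis vectors may be normalized independently without disturbing the symplectic form $\omega(v,w) = [x,y]$.

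Finally I would run the symplectic linear algebra on $(V,\omega)$ and lift. If $q \equiv 0$, any symplectic basis lifts through the realization lemma to representatives with all $p$-th powers equal to $1$, giving type~I. If $q \not\equiv 0$, nondegeneracy of $\omega$ furnishes a unique $w \neq 0$ with $q = \omega(w,-)$; using transitivity of the symplectic group on nonzero vectors I would select a symplectic basis $\{a_i \bmod ZG, b_i \bmod ZG\}$ with $a_1 \bmod ZG = w$, so that $q$ is supported on $b_1$ alone, and then replace $a_1$ by $a_1 b_1$ — a symplectic shear fixing the remaining basis vectors — to arrange $q(a_1 \bmod ZG) = q(b_1 \bmod ZG) = 1$ with $q$ vanishing on all other basis vectors. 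Lifting this basis through the realization lemma then yields $a_1^p = b_1^p = z$ and $a_i^p = b_i^p = 1$ for $2 \leq i \leq m$, which is type~II. In both cases the symplectic relations among the $a_i, b_i$ persist because $\omega$ is computed from commutators and is insensitive to the central adjustments made during lifting.
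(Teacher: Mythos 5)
Your proposal is correct and follows essentially the same route as the paper: define the $p$-th power map $V \to ZG/(ZG)^p$, prove it is linear using the 2-step nilpotent power identity (with the hypothesis $p^j \neq 2$ entering exactly as you describe), normalize the resulting functional to $(1,1,0,\dots,0)$ by symplectic linear algebra, and then adjust representatives by central elements. Your explicit shear $a_1 \mapsto a_1 b_1$ just unpacks the paper's terse appeal to transitivity of the symplectic group on nonzero vectors.
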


\begin{remark}[Nomenclature]
    For the reader familiar with existing terminology from \cite{newman_class_1960}, a ``type I (II) basis'' as named in our \ref{symplectic-basis-normalized} corresponds to a ``canonic normal basis with zero (one) pairs of type II'' in the vocabulary of Newman.
\end{remark}

\begin{proof}
    Note that $x^p \in ZG$ for all $x \in G$ because $G/ZG$ has exponent $p$. Let $(ZG)^p = \{ u^p : u \in ZG \}$ and identify $ZG / (ZG)^p$ with $\FF_p$ by mapping $z \mod (ZG)^p \mapsto 1$. Define a map
    \begin{align*}
        \nu: V &\to ZG / (ZG)^p = \FF_p \\
        x \mod ZG &\mapsto x^p \mod (ZG)^p
    \end{align*}
    Viewing $V = G/ZG$ as a vector space written multiplicatively, $\nu$ commutes with scalar multiplication and
    \begin{gather*}
        \nu((x \mod ZG) (y \mod ZG)) = (xy)^p \mod (ZG)^p = [y,x]^{\frac{p(p-1)}{2}} x^p y^p \mod (ZG)^p
    \end{gather*}
    for $x,y \in G$ so $\nu$ is a linear functional as long as $[y,x]^{\frac{p(p-1)}{2}} = 1 \mod (ZG)^p$. This holds if $p^j \neq 2$: If $p$ is odd then $p \mid \frac{p(p-1)}{2}$ so $[y,x]^\frac{p(p-1)}{2} = 1$ because $G'$ has order $p$. If $j \geq 2$ then $G' \nleq ZG$ so $G' \leq (ZG)^p$.

    If $\nu$ is the trivial linear functional on $V$, take $\sB$ to be any symplectic basis of $V$. Otherwise, there exists a symplectic basis $\sB$ of $V$ such that $\nu$ written with respect to $\sB$ is the row vector
    \begin{gather*}
        \nu = \m{1 & 1 & 0 & \cdots & 0}
    \end{gather*}
    because symplectic automorphisms act transitively on nontrivial vectors.
    
    In other words, for each basis vector $x_j \mod ZG \in \sB$,
    $$x_j^p \mod (ZG)^p = \nu(x_j \mod ZG) = z^{\nu_j} \mod (ZG)^p$$
    so there exists $u_j \in ZG$ such that $x_j^p = z^{\nu_j} u_j^p$. Then $x_ju_j^{-1} \equiv x_j \mod ZG$ and $(x_ju_j^{-1})^p = z^{\nu_j}$. Thus $x_ju_j^{-1} \in G$ are representatives of the basis $\sB$ satisfying (I) if $\nu$ is trivial and (II) otherwise.
\end{proof}

We will now construct two standard non-isomorphic JN2 groups for each given class $(p^j,m)$. The proof of the classification theorem will exhibit an isomorphism from any arbitrary JN2 group to a standard one. The primary method of constructing larger JN2 groups from smaller ones is taking a central product.

\begin{definition}[Central product]\label{central-product}
    Let $G$ and $H$ be groups for which there exists an isomorphism $\varphi: ZG \to ZH$. Define the \i{central product} of $G$ and $H$ (with respect to $\varphi$) to be
    \begin{gather*}
        G \odot H = (G \x H) / N
    \end{gather*}
    where $N = \ang{(g,\varphi(g)^{-1}): g \in ZG}$, namely identifying $ZG \x 1$ with $1 \x ZH$ by the isomorphism $\varphi$. By $G^{\odot n}$ we mean the central product of $n$ copies of $G$ with the identity isomorphism on $ZG$.

    Note that if $G,H$ are JN2 of class $(p^j,m_1)$ and $(p^j,m_2)$ then $G \odot H$ is JN2 of class $(p^j, m_1 + m_2)$ by \ref{jn2-characterization} since
    \begin{enumerate}
    \item
        $(G \odot H)' = G' \x H' / N \iso G' \iso H' \iso \ZZ/p\ZZ$,
    \item
        $Z(G \odot H) \iso ZG \iso ZH$, and
    \item
        $(G\odot H)/Z(G \odot H) \iso (G/ZG) \x (H / ZG)$.
    \end{enumerate}
\end{definition}

\begin{construction}[Standard JN2 groups]\label{standard-jn2}
    Define the groups
    \begin{align*}
      M(p^j) &= \ang{z, a, b: [z,a] = [z,b] = 1; [a,b] = z^{p^{j-1}}; z^{p^j} = a^p = b^p = 1} \\
      N(p^j) &= \ang{z, a, b: [z,a] = [z,b] = 1; [a,b] = z^{p^{j-1}}; z^{p^j} = 1; a^p = b^p = z} \\
      \I(p^j,m) &= M(p^j)^{\odot m} \\
      \II(p^j,m) &= N(p^j) \odot M(p^j)^{\odot (m-1)}
    \end{align*}
    Observe the following:
    \begin{enumerate}
    \item
        $M(p^j)$ and $N(p^j)$ are JN2 (by \ref{jn2-characterization}) of class $(p^j,1)$ with each center generated by $z$ and $\{a,b\}$ as a symplectic basis of $V$.
    \item
        $\I(p^j,m)$ and $\II(p^j,m)$ are JN2 of class $(p^j,m)$ by the remarks following \ref{central-product}.
    \item
        $\I(p^j,m)$ and $\II(p^j,m)$ are not isomorphic when $p^j \neq 2$: The group $N(p^j)$ has an element of order $p^{j+1}$ (for example, $a$ or $b$) and therefore so does $\II(p^j,m)$. On the contrary, the group $M(p^j)$, and consequently also $\I(p^j,m) = M(p^j)^{\odot m}$, has exponent at most $p^j$: The linear functional $\nu$ (as in the proof of \ref{symplectic-basis-normalized}) is trivial on the symplectic basis $\{a,b\}$ so $M(p^j)^p \leq (ZG)^p$, hence $M(p^j)^{p^j} \leq (ZG)^{p^j} = 1$.

        \emph{Note:} If $p^j = 2$, then $\I(p^j,m)$ and $\II(p^j,m)$ are still non-isomorphic: $M(2)$ is the dihedral group $D_8$ and $N(2)$ is the quaternion group $Q_8$, which contain two and six elements of order 4 respectively and both have centers of order 2. In particular no elements of order 4 are central. The larger groups $\I(p^j,m)$ and $\II(p^j,m)$ can then be distinguished by counting the number of elements of order 4 because only central elements are identified in the central product. We will not require this case.
    \end{enumerate}
\end{construction}

We are now ready to state and prove the classification theorem of JN2 groups.

\begin{theorem}[Classification of finite JN2 groups {\cite[Theorem 5, Theorem 7(c), Lemma 8(i)]{newman_class_1960}}] \label{jn2-classification}
    Let $G$ be JN2 of class $(p^j,m)$. Suppose that $p^j \neq 2$. Then $G$ is isomorphic to either $\I(p^j,m)$ or $\II(p^j,m)$.
\end{theorem}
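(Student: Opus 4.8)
The plan is to use \ref{symplectic-basis-normalized} to reduce everything to concrete relations and then to recognize $G$ as an internal central product of the standard rank-one pieces $M(p^j)$ and $N(p^j)$ from \ref{standard-jn2}. First I would apply \ref{symplectic-basis-normalized} (available since $p^j \neq 2$) to obtain representatives $a_i, b_i \in G$ of a normalized symplectic basis $\{a_i \mod ZG, b_i \mod ZG\}_{i=1}^m$ of $V = G/ZG$. Fixing the generator $z$ of $ZG$, the symplectic-basis condition translates directly into commutator relations: $[a_i,b_i] = z^{p^{j-1}}$ (a generator of $G'$), while $[a_i,a_k] = [b_i,b_k] = [a_i,b_k] = 1$ for $i \neq k$, and $z$ is central of order $p^j$. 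The lemma additionally pins down the $p$-th powers: in type I all $a_i^p = b_i^p = 1$, and in type II only $a_1^p = b_1^p = z$ with the remaining powers trivial.

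Next I would set $H_i = \ang{z, a_i, b_i} \leq G$ and identify each $H_i$ with a standard rank-one group. By the relations above, $z, a_i, b_i$ satisfy exactly the defining relations of $M(p^j)$ (type I, and also $i \geq 2$ in type II) or of $N(p^j)$ (the index $i = 1$ in type II), so there is a surjection from the corresponding standard group onto $H_i$ carrying the presented central generator to $z$. To see this surjection is an isomorphism I count orders: $z$ generates $ZG \iso \ZZ/p^j\ZZ$, so $ZG \leq H_i$, while $H_i/ZG$ is the span of the independent vectors $a_i \mod ZG, b_i \mod ZG$ and hence has order $p^2$; thus $|H_i| = p^{j+2} = |M(p^j)| = |N(p^j)|$ (using \ref{standard-jn2}), forcing the surjection to be an isomorphism fixing $z$.

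Finally I would assemble the $H_i$ into a central product. Because the chosen basis is orthogonal under the commutator pairing, the subgroups $H_i$ pairwise commute, and they all contain the common central subgroup $ZG = \ang{z}$; hence the multiplication map $H_1 \x \cdots \x H_m \to G$, $(h_1, \dots, h_m) \mapsto h_1 \cdots h_m$, is a homomorphism that kills the identification subgroup $N$ of \ref{central-product} and so descends to a homomorphism $\phi \colon H_1 \odot \cdots \odot H_m \to G$. It is surjective because $z$ and all the $a_i, b_i$ lie in its image and these generate $G$. A short computation shows $|H_1 \odot \cdots \odot H_m| = p^{j+2m}$ (each of the $m$ factors contributes $p^{j+2}$ and each of the $m-1$ center-identifications divides by $p^j$), which equals $|G| = |ZG|\,|V| = p^{j+2m}$; a surjection between finite groups of equal order is an isomorphism. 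Using the identifications of the previous step, $H_1 \odot \cdots \odot H_m$ is $M(p^j)^{\odot m} = \I(p^j,m)$ in type I and $N(p^j) \odot M(p^j)^{\odot(m-1)} = \II(p^j,m)$ in type II, which gives the theorem.

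I expect the main obstacle to be organizational rather than deep: verifying cleanly that the surjections $M(p^j) \to H_i$ and $N(p^j) \to H_i$ respect the chosen center generator $z$, so that they glue to an isomorphism of external central products, and carrying out the two order counts correctly. Everything rests on the normalization already secured by \ref{symplectic-basis-normalized}; once the $p$-th powers and commutators are in standard form, the central-product recognition reduces to bookkeeping with orders, for which surjectivity together with equality of orders replaces any direct injectivity argument.
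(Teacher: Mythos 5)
Your proposal is correct and follows essentially the same route as the paper: apply \ref{symplectic-basis-normalized}, set $H_i = \ang{z,a_i,b_i}$, identify each $H_i$ with $M(p^j)$ or $N(p^j)$, and recognize $G$ as their central product. The paper simply asserts these identifications and the decomposition $G \iso \bigodot_i H_i$, whereas you fill in the von Dyck surjections and the order counts that justify them; this is elaboration, not a different argument.
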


\begin{proof}
    Let $z$ be a generator of $ZG$ and let $\sB$ be the symplectic basis given by \ref{symplectic-basis-normalized}. In the notation of \ref{symplectic-basis-normalized}, let $H_i = \ang{z,a_i,b_i}$. If $\sB$ is type I then $H_i = M(p^j)$ for all $i$. If $\sB$ is type II then $H_1 = N(p^j)$ and $H_i = M(p^j)$ for $i \geq 2$.

    The subgroups $H_i$ commute pairwise, together generate $G$, and intersect precisely in their centres $\ang{z}$, so $G \iso \bigodot_{i=1}^m H_i$. Hence $G$ is isomorphic to $\I(p^j,m)$ or $\II(p^j,m)$ according to the type of the basis $\sB$.
\end{proof}

\begin{remarks}
\item
    (Generalizations)
    For brevity, we have excluded the case of $p^j = 2$ and specialized to finite groups. With additional work, the $p^j = 2$ case and some infinite JN2 groups (those with a countable symplectic basis) also admit a classification as central products of elementary JN2 groups, see \cite{newman_class_1960}.
\item
    (Special cases)
    Note that $M(p)$ and $N(p)$ are the only two groups of order $p^3$. The group $M(p) = \I(p,1)$ is isomorphic to the Heisenberg group over $\FF_p$. A generalization of the finite Heisenberg groups are the \emph{extraspecial groups}, which are defined to be $p$-groups $G$ with $ZG$ order $p$ and $G/ZG$ nontrivial elementary abelian. In particular, extraspecial groups are JN2 and it follows from \ref{jn2-classification} that there are precisely two distinct extraspecial groups of order $p^{1+2m}$ for each choice of a prime $p$ and positive integer $m$, and that this exhausts all extraspecial groups.
\end{remarks}

\section{Minimal nonabelian quotients of $B_n(\Sigma_g)$}\label{sec:proof}

In this section we provide the proof of \ref{big-theorem-braid-reduced}.  
%
%
The strategy of the proof will be to utilize an explicit presentation of the surface braid groups (\ref{presentation}) to characterize braid-reduced quotients by the relations that they must satisfy (\ref{br-gen-set}). We will then show that many JN2 groups are realized as nonabelian braid-reduced quotients of $B_n(\Sigma_g)$ (\ref{small-br-quotients}) and finally prove that all nonabelian braid-reduced quotients of minimum order belong to the list of JN2 groups in \ref{small-br-quotients}.

The following presentation of $B_n(\Sigma_g)$ is due to Bellingeri \cite{bellingeri_presentations_2004}.

\begin{theorem}[Presentation of $B_n(\Sigma_g)$ {\cite[Theorem 1.2]{bellingeri_presentations_2004}}] \label{presentation}
    For $g \geq 1$ and $n \geq 2$, the surface braid group $B_n(\Sigma_g)$ admits the presentation:
    \begin{itemize}
    \item
        generators: $\sigma_1,\dots,\sigma_{n-1},a_1,\dots,a_g,b_1,\dots,b_g$.
    \item
        relations:

        \hspace{1.5em} braid relations:
        \begin{alignat*}{3}
            &&&[\sigma_i, \sigma_j] = 1 &&(1 \leq i,j \leq n-1 \text{ and } |i-j| \geq 2) \\
            &&&\sigma_i\sigma_{i+1}\sigma_i = \sigma_{i+1}\sigma_i\sigma_{i+1} &&(1 \leq i \leq n-2) \\
            \intertext{\hspace{1.5em} mixed relations:}
            &\text{\normalfont(R1)}\quad
            && [a_r, \sigma_i] = [b_r, \sigma_i] = 1
            && (1 \leq r \leq g \text{ and } i \neq 1) \\
            &\text{\normalfont(R2)}
            && [a_r, \sigma_1^{-1}a_r\sigma_1^{-1}] = [b_r, \sigma_1^{-1}b_r\sigma_1^{-1}] = 1 
            && (1 \leq r \leq g) \\
            &\text{\normalfont(R3)}
            && [a_s,\sigma_1a_r\sigma_1^{-1}] = [b_s,\sigma_1b_r\sigma_1^{-1}] = 1   
            && (1 \leq s < r \leq g) \\
            &&&[b_s,\sigma_1a_r\sigma_1^{-1}] = [a_s, \sigma_1b_r\sigma_1^{-1}] = 1 
            && (1 \leq s < r \leq g) \\
            &\text{\normalfont(R4)}
            && [a_r, \sigma_1^{-1}b_r\sigma_1^{-1}] = \sigma_1^2
            && (1 \leq r \leq g) \\
            &\text{\normalfont(TR)}
            && [a_1,b_1^{-1}] \cdots [a_g,b_g^{-1}] = \sigma_1\sigma_2\cdots \sigma_{n-1}^2 \cdots \sigma_2\sigma_1
        \end{alignat*}
    \end{itemize}
\end{theorem}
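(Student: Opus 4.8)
The plan is to prove the presentation by induction on $n$, using the Fadell--Neuwirth fibration as the engine for computing fundamental groups of configuration spaces. Passing first to the ordered configuration space, the forgetful map $\mathrm{Conf}_n(\Sigma_g) \to \mathrm{Conf}_{n-1}(\Sigma_g)$ that drops the last point is a fibration whose fiber is the punctured surface $\Sigma_g \setminus \{n-1 \text{ points}\}$. For $g \geq 1$ all three spaces are aspherical and the fiber is homotopy equivalent to a wedge of $2g + n - 2$ circles, so the long exact homotopy sequence collapses to a short exact sequence
\[
    1 \to F_{2g+n-2} \to P_n(\Sigma_g) \to P_{n-1}(\Sigma_g) \to 1,
\]
with free kernel. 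To reach the full (unordered) braid group $B_n(\Sigma_g)$ one keeps track of the $S_n$-action, i.e.\ works with the mixed braid group in which a single strand is distinguished and forgotten, so that the quotient is $B_{n-1}(\Sigma_g)$ and the kernel is again the free $\pi_1$ of the punctured surface.

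The inductive step rests on the standard presentation of a group extension $1 \to K \to G \to Q \to 1$: given presentations of the free kernel $K$ and of the quotient $Q$ (the latter from the inductive hypothesis), one presents $G$ by the generators of $K$ together with lifts of the generators of $Q$, subject to (i) the relations of $K$, (ii) a relation expressing each relator of $Q$ as a word in $K$, and (iii) for each pair of generators, a ``conjugation relation'' recording the automorphism of the free group $K$ induced by the $Q$-action. Geometrically the generators are the half-twists $\sigma_i$, which generate the embedded disk braid group $B_n \hookrightarrow B_n(\Sigma_g)$ and supply the braid relations, together with the loops $a_r, b_r$ in which a single point traverses the $r$-th handle of $\Sigma_g$.

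The easy direction is to check that these geometric generators satisfy the listed relations: (R1) says a handle loop of a point commutes with a half-twist not involving that point; (R2)--(R3) record that loops supported on disjoint subsurfaces can be pushed apart; and (R4) isolates the single essential linking between an $a_r$-- and a $b_r$--loop on adjacent strands, measured by $\sigma_1^2$. Each can be verified by drawing the braids explicitly or by computing the induced automorphism of the fiber's free group. The real content is \emph{completeness}, for which I would show by induction that the abstractly presented group surjects onto $B_n(\Sigma_g)$ while simultaneously fitting into the same short exact sequence displayed above; since the surjection is then an isomorphism on both kernel and quotient, it is an isomorphism. Concretely this means verifying that the relations (i)--(iii) coming from Reidemeister--Schreier on the fibration sequence are precisely (a reshuffling of) the braid relations together with (R1)--(R4) and (TR).

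The main obstacle, and the only genuinely global relation, is (TR). In $\pi_1(\Sigma_g)$ the product of handle commutators $[a_1,b_1^{-1}]\cdots[a_g,b_g^{-1}]$ is trivial, since it bounds the complementary disk; but in the configuration space that bounding disk contains the other $n-1$ points, so dragging a single strand around this loop forces a full rotation of all strands about the moving point. This boundary/point-pushing contribution is exactly the full twist $\sigma_1\sigma_2\cdots\sigma_{n-1}^2\cdots\sigma_2\sigma_1$, and the delicate part of the argument is computing this contribution correctly and confirming that no further relations are needed to close the induction. Once (TR) is pinned down the inductive step completes and the presentation follows.
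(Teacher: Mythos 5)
First, a point of context: the paper does not prove this statement at all --- it is imported verbatim from Bellingeri \cite{bellingeri_presentations_2004} as a black box, so there is no internal proof to compare your argument against. Judged on its own terms, your outline follows the standard strategy (essentially Bellingeri's own): induct on $n$ via the Fadell--Neuwirth fibration, present the extension by combining a presentation of the free kernel with the inductive presentation of the quotient, and isolate (TR) as the one global relation coming from the fact that the product of handle commutators bounds a disk containing the remaining strands. As a roadmap this is sound.

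As a proof, however, it has genuine gaps, and you flag the largest ones yourself. (1) The short exact sequence you write, with kernel $F_{2g+n-2}=\pi_1(\Sigma_g\setminus\{n-1\ \text{points}\})$ and quotient $B_{n-1}(\Sigma_g)$, presents the \emph{mixed} braid group in which one strand is distinguished; that is an index-$n$ subgroup of $B_n(\Sigma_g)$, not $B_n(\Sigma_g)$ itself, and passing from the former to the latter (adjoining $\sigma_{n-1}$ and checking that no relations beyond the braid relations are introduced) is a nontrivial step your outline does not address. (2) The conjugation relations (iii) --- the automorphisms of the free kernel induced by lifts of the generators of the quotient --- are never computed, and showing that these are consequences of (R1)--(R4) together with the braid relations is precisely where the content of the theorem lives; asserting that they are ``a reshuffling'' of the listed relations is the conclusion, not an argument. (3) The point-pushing computation producing the right-hand side of (TR) is described but not carried out, and the exact form of the relations ($[a_r,b_r^{-1}]$ rather than $[a_r,b_r]$, the particular conjugates $\sigma_1^{\pm 1}a_r\sigma_1^{-1}$ in (R2)--(R4)) depends on choices of lifts that your sketch never pins down; as the paper's own remark on the geometric interpretation emphasizes, the presentation is sensitive to these choices. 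None of these steps is likely to fail, but each requires substantial careful work, so what you have is a correct plan for reproving Bellingeri's theorem rather than a proof of it.
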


\begin{remark}[Geometric interpretation of the presentation]\label{presentation-geometric-intuition}
    The embeddings $B_n \hookrightarrow B_n(\Sigma_g)$ identify the Artin braid generators with the Bellingeri generators $\sigma_i$. The remaining generators $a_r,b_r$ can be understood loosely to be the standard generators of $\pi_1(\Sigma_g)$.

    More precisely, let $\{p_1,\dots,p_n\} \in \UConf_n(\Sigma_g)$ denote the basepoint of $B_n(\Sigma_g)$ and let $D \subset \Sigma_g$ be an open disk with $p_1 \in \del D$, with $p_2,\dots,p_n$ in the interior of $D$. There is an inclusion
    $$ \pi_1(\Sigma_g - D, p_1) \hookrightarrow B_n(\Sigma_g) $$
    which takes a loop $\gamma$ in $\Sigma_g - D$ to the braid on $\Sigma_g$ with first strand $\gamma$ and all other strands trivial. The group $\pi_1(\Sigma_g - D, p_1)$ is free on $2g$ generators and surjects onto $\pi_1(\Sigma_g, p_1)$ which has a standard presentation. The surface braid group generators $a_r,b_r \in B_n(\Sigma_g)$ can then be understood as a choice of a free generating set of $\pi_1(\Sigma_g - D, p_1)$ which lifts the standard generating set of $\pi_1(\Sigma_g, p_1)$. It should be emphasized that the lifts are not canonical and that the presentation depends on the choices; the curious reader may refer to \cite{bellingeri_presentations_2004} for illustrations of the loops which produce this particular presentation.
\end{remark}

\newpage
\begin{lemma}[Characterization of braid-reduced quotients] \label{br-gen-set}
    Let $n \geq 3$ and $g \geq 1$. A finite group $G$ is a braid-reduced quotient of $B_n(\Sigma_g)$ if and only if $G$ admits a generating set $\{\sigma,a_1,b_1,\dots,a_g,b_g\}$ satisfying the relations
    \begin{alignat*}{3}
      &\text{\Grel{R1}}\quad
      && [a_r, \sigma] = [b_r, \sigma] = 1
      && (1 \leq r \leq g) \\
      &\text{\Grel{R3}}
      && [a_s,a_r] = [b_s,b_r] = [b_s,a_r] = [a_s, b_r] = 1\quad  
      && (1 \leq s < r \leq g) \\
      &\text{\Grel{R4}}
      && [a_r, b_r] = \sigma^2
      && (1 \leq r \leq g) \\
      &\text{\Grel{TR}}
      && \sigma^{2(g+n-1)} = 1 
   \end{alignat*}
\end{lemma}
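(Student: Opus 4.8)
The plan is to show that being braid-reduced is equivalent to all the $\sigma_i$ collapsing onto a single, necessarily central, element, and then to read off that under this collapse each relation of Bellingeri's presentation (\ref{presentation}) becomes exactly one of the primed relations or a triviality. Concretely, I would first observe that if $\phi\colon B_n(\Sigma_g)\to G$ is a quotient whose restriction to $B_n=\langle\sigma_1,\dots,\sigma_{n-1}\rangle$ has cyclic, hence abelian, image, then the braid relation $\sigma_i\sigma_{i+1}\sigma_i=\sigma_{i+1}\sigma_i\sigma_{i+1}$ simplifies in an abelian group to $\phi(\sigma_i)=\phi(\sigma_{i+1})$, so all the $\phi(\sigma_i)$ agree with a common value $\sigma$; conversely, if they all agree then $\phi(B_n)=\langle\sigma\rangle$ is cyclic. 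Thus ``braid-reduced'' is the same as ``all $\sigma_i$ share a common image $\sigma$''. I would then note that relation (R1), which is available precisely because $n\geq 3$ guarantees an index $i\neq 1$ with $i\leq n-1$, forces $\sigma$ to commute with every $a_r$ and $b_r$; since these together with $\sigma$ generate $G$, the element $\sigma$ is central. This centrality is what drives every subsequent simplification.

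With this in hand both directions follow from substituting $\sigma_i\mapsto\sigma$ into the presentation and invoking von Dyck's theorem. For the forward direction I would simplify each relation: the braid relations become tautologies; (R1) becomes \Grel{R1}; (R2) reads $[a_r,\sigma^{-1}a_r\sigma^{-1}]=[a_r,a_r\sigma^{-2}]=1$, which is automatic once $\sigma$ is central, hence redundant; using $\sigma a_r\sigma^{-1}=a_r$ and $\sigma b_r\sigma^{-1}=b_r$, relation (R3) collapses to \Grel{R3}; and since $[a_r,b_r\sigma^{-2}]=[a_r,b_r]$ for central $\sigma$, relation (R4) becomes $[a_r,b_r]=\sigma^2$, i.e.\ \Grel{R4}. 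For the converse, given a finite $G$ with a generating set satisfying the four primed relations, I would define $\phi$ on Bellingeri's generators by $\sigma_i\mapsto\sigma$, $a_r\mapsto a_r$, $b_r\mapsto b_r$; centrality of $\sigma$ again follows from \Grel{R1}, and the computations just described verify every defining relation of \ref{presentation} in $G$, so $\phi$ is a well-defined surjection with $\phi(B_n)=\langle\sigma\rangle$ cyclic, exhibiting $G$ as a braid-reduced quotient.

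The one genuinely computational step, and the place I expect the only real friction, is the reduction of the torus relation (TR) to \Grel{TR}. On the right-hand side, setting every $\sigma_i=\sigma$ in the palindromic word $\sigma_1\sigma_2\cdots\sigma_{n-1}^2\cdots\sigma_2\sigma_1$ requires counting its letters, namely $(n-1)+1+(n-2)=2(n-1)$, so the right-hand side becomes $\sigma^{2(n-1)}$. On the left-hand side I would use \Grel{R4} together with centrality of $\sigma$ to evaluate each factor: from $a_rb_ra_r^{-1}=\sigma^2b_r$ one obtains $a_rb_r^{-1}a_r^{-1}=b_r^{-1}\sigma^{-2}$, hence $[a_r,b_r^{-1}]=b_r^{-1}\sigma^{-2}b_r=\sigma^{-2}$, so the product over $r$ is $\sigma^{-2g}$. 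Equating the two sides yields $\sigma^{-2g}=\sigma^{2(n-1)}$, that is $\sigma^{2(g+n-1)}=1$, which is exactly \Grel{TR}. Getting the sign right in $[a_r,b_r^{-1}]=\sigma^{-2}$ is the crucial point, since it is what produces $g+n-1$ rather than $g-n+1$ in the final relation.
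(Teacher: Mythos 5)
Your proposal is correct and follows essentially the same route as the paper: identify all $\sigma_i$ with a single central element $\sigma$ (equivalent to cyclic image of $B_n$), observe that (R2) becomes redundant and (R1), (R3), (R4) reduce to the primed relations, and convert (TR) to \Grel{TR} via the computation $[a_r,b_r^{-1}]=\sigma^{-2}$. Your additional remarks --- deriving $\phi(\sigma_i)=\phi(\sigma_{i+1})$ from the braid relation in an abelian image, and noting that $n\geq 3$ is what makes (R1) non-vacuous --- are correct details that the paper leaves implicit.
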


\begin{proof}
    A finite quotient of $B_n(\Sigma_g)$ is presented by \ref{presentation} with additional relations. The condition that $B_n$ has cyclic image in a quotient is equivalent to adding the relations $$\sigma_i = \sigma_1,\qquad 1\leq i \leq n.$$ If we add these relations and write $\sigma = \sigma_1$, the relation (R2) is made redundant and (R1), (R3), and (R4) respectively reduce to the relations \Grel{R1}, \Grel{R3}, and \Grel{R4} as in the statement of the lemma. The final relation (TR) reduces to
    \begin{gather*}
        [a_1,b_1^{-1}] \cdots [a_g,b_g^{-1}] = \sigma^{2(n-1)}
    \end{gather*}
    which is equivalent to \Grel{TR} because from \Grel{R4} we can write $a_r = b_r^{-1} \sigma^{-2}a_r b_r$ so that
    \begin{gather*}
        [a_r, b_r^{-1}]
        = a_r b_r^{-1} a_r^{-1} b_r
        \overset{\text{\Grel{R4}}}{=}
        (b_r^{-1} \sigma^{-2} a_r b_r) b_r^{-1} a_r^{-1} b_r
        = b_r^{-1} \sigma^{-2} b_r
        \overset{\text{\Grel{R1}}}{=}
        \sigma^{-2}. \qedhere
    \end{gather*}
\end{proof}

The following lemma proves that many JN2 groups are braid-reduced quotients.

\begin{lemma}\label{small-br-quotients}
    Let $n \geq 3$ and $g \geq 1$. Let $p$ be a prime dividing $g+n-1$.
    \begin{parts}
    \item
        If $p = 2$ then $\I(2^2,g)$ and $\II(2^j,g)$ for all $j \geq 2$ are nonabelian braid-reduced quotients of $B_n(\Sigma_g)$.
    \item
        If $p$ is odd then $\I(p,g)$ and $\II(p^j,g)$ for all $j \geq 1$ are nonabelian braid-reduced quotients of $B_n(\Sigma_g)$.
    \end{parts}
\end{lemma}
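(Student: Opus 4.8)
The plan is to apply the characterization in \ref{br-gen-set}: to realize each listed group $G$ as a braid-reduced quotient it is enough to exhibit a generating set $\{\sigma,a_1,b_1,\dots,a_g,b_g\}$ of $G$ satisfying \Grel{R1}, \Grel{R3}, \Grel{R4}, and \Grel{TR}. I would take the $a_r,b_r$ to be the generators of the $r$-th factor in the central-product descriptions of $\I(p^j,g)$ and $\II(p^j,g)$ from \ref{standard-jn2}, and take $\sigma$ to be a power of the central generator $z$. With these choices \Grel{R3} holds automatically, since generators coming from distinct factors of a central product commute, and \Grel{R1} holds because $\sigma$ is central.

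The relation \Grel{R4} requires $\sigma^2=[a_r,b_r]=z^{p^{j-1}}$, so I would solve $2t\equiv p^{j-1}\pmod{p^j}$ and set $\sigma=z^t$. This congruence is solvable exactly when $p$ is odd (take $t=2^{-1}p^{j-1}$, valid for every $j\geq 1$) or when $p=2$ and $j\geq 2$ (take $t=2^{j-2}$); the excluded case $p^j=2$ is precisely the one in which the commutator has no central square root. In all solvable cases $\sigma^2=z^{p^{j-1}}$ has order $p$, so \Grel{TR} becomes $\sigma^{2(g+n-1)}=(z^{p^{j-1}})^{g+n-1}=1$, which holds exactly because $p\mid g+n-1$ by hypothesis. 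Each group is nonabelian because it is JN2.

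The main point --- and the step that selects the precise list of groups --- is checking that $\{\sigma,a_1,\dots,b_g\}$ actually generates $G$; since $\sigma\in\ang{z}$, this amounts to deciding when $z\in\ang{\sigma,a_1,\dots,b_g}$. For the type-II groups the first factor is $N(p^j)$, in which $a_1^p=z$, so $z\in\ang{a_1}$ and the $a_r,b_r$ already generate $G$; hence $\II(p^j,g)$ is realized for every admissible $j$. For the type-I groups every factor is $M(p^j)$, where $a_r^p=b_r^p=1$, and passing to the abelianization $\I(p^j,g)^{\mathrm{ab}}\iso(\ZZ/p\ZZ)^{2g}\x\ZZ/p^{j-1}\ZZ$ shows that the classes $\bar a_r,\bar b_r$ generate the first factor while $\bar\sigma=\bar z^{\,t}$ must generate the cyclic factor $\ZZ/p^{j-1}\ZZ$. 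A short order computation using the value of $t$ above shows $\bar\sigma$ generates this factor exactly when $j=1$ (for odd $p$) or $j=2$ (for $p=2$), which are precisely the asserted cases $\I(p,g)$ and $\I(2^2,g)$. I expect this generation step to be the only genuine obstacle; the remaining verifications are immediate from the defining relations in \ref{standard-jn2}.
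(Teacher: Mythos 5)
Your proposal is correct and follows essentially the same route as the paper: both reduce the problem to \ref{br-gen-set}, take $a_r,b_r$ to be the standard (symplectic-basis) generators of the central-product factors, set $\sigma=z^t$ with $2t\equiv p^{j-1}\pmod{p^j}$ (yielding exactly the paper's exponents $t=1$, $2^{j-2}$, $(p+1)/2$, $p^{j-1}(p+1)/2$), and verify \Grel{TR} from $p\mid g+n-1$. Your abelianization argument for the generation step, and the observation that this is exactly what excludes $\I(p^j,g)$ for the larger $j$, is a slightly more explicit version of what the paper asserts in one line, but it is not a different approach.
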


\begin{proof}
    Let $p$ be a prime dividing $g+n-1$. By \ref{br-gen-set} we need to exhibit a generating set $\{\sigma,a_r,b_r\}$ of each group satisfying relations \Grel{R1}, \Grel{R3}, \Grel{R4}, and \Grel{TR}.

    In any of the JN2 groups in the statement of the theorem, fix a generator $z$ of the center and choose $a_1,b_1,\dots,a_g,b_g$ to be the representatives of a symplectic basis of $V$ given by \ref{symplectic-basis-normalized}. By \ref{jn2-classification} this basis will be type I for $\I(2^2,g)$ and $\I(p,g)$, and type II for $\II(2^j,g)$ and $\II(p^j,g)$. Note that with the given symplectic form, the condition that a basis is symplectic is simply that all basis elements commute except symplectic pairs $[a_r,b_r] = z^{p^{j-1}}$. In particular, \Grel{R3} is satisfied.

    We will now choose $\sigma$ for each group and verify that $\{\sigma,a_r,b_r\}$ generate the group and satisfy \Grel{R4}. 
    \begin{enumerate}
    \item
        $\I(2^2,g)$ is generated by $\sigma = z$ and the $a_r,b_r$. These generators satisfy \Grel{R4} because $[a_r,b_r] = z^2 = \sigma^2$.
    \item
        $\II(2^j,g)$, for a given $j \geq 2$, is generated by the $a_r,b_r$ alone because $a_1^p = z$. If we choose $\sigma = z^{2^{j-2}}$ then \Grel{R4} is satisfied because $[a_r, b_r] = z^{2^{j-1}} = \sigma^2$.
    \item
        $\I(p,g)$ for odd prime $p$ is generated by $\sigma = z^{(p^j+1)/2}$ and the $a_r,b_r$. Then \Grel{R4} is satisfied because $[a_r,b_r] = z = \sigma^2$.
    \item
        $\II(p^j,g)$, for given odd prime $p$ and $j \geq 1$, is generated by $a_r,b_r$ alone because $a_1^p = z$. Then set $\sigma = z^{(p^j+p^{j-1})/2}$ so that \Grel{R4} is satisfied because $[a_r,b_r] = z^{p^{j-1}} = \sigma^2$.
    \end{enumerate}
    In all cases $\sigma$ was chosen to be central, hence \Grel{R1} is satisfied.
    
    It remains to check that \Grel{TR} holds, namely that $|\sigma|$ divides $2 (g+n-1)$. Recall that we are assuming that $p \mid (g+n-1)$. In cases 1 and 2, we have $p = 2$ and $|\sigma| = 4 = 2p \mid 2(g+n-1)$. In cases 3 and 4, we have $|\sigma| = p \mid (g+n-1)$. 
\end{proof}

We are now prepared to prove \ref{big-theorem-braid-reduced}.

\begin{proof}[Proof of \ref{big-theorem-braid-reduced}]
    Let $G$ be a nonabelian braid-reduced quotient and let $\{\sigma,a_1,b_1,\dots,a_g,b_g\}$ denote the generating set of $G$ as given by \ref{br-gen-set}. By \Grel{R1} and \Grel{R3}, all pairs of these generators commute except for pairs $a_r,b_r$ so $G' = \ang{\sigma^2}$ by \Grel{R4}. Then $G'$ is central and nontrivial, which is to say that $G$ is 2-step nilpotent.

    Assume now that $G$ is of minimum order. Then $G$ has no proper nonabelian quotients and thus is JN2 of some class $(p^j,m)$.

    We make three claims:
    \begin{enumerate}
    \item $p^j \neq 2$,
    \item $m = g$, and
    \item $p \mid (g+n-1)$.
    \end{enumerate}

    These claims will complete the proof: Since $|G| = p^{2m+j}$, it follows from claims 1 and 2 that $|G| \geq p^{2g+1}$ if $p$ is odd and $|G| \geq 2^{2g+2}$ if $p = 2$. Claims 1 and 3 along with the minimality of $G$ together imply that $G$ is one of (in particular, the smallest of) the quotients constructed in \ref{small-br-quotients}. Explicitly: If $g+n-1$ is even then $p^j = 2^2$. Otherwise $p^j = p$ where $p$ is the smallest prime dividing $g+n-1$. Finally, $G$ must be isomorphic to either $\I(p^j,g)$ or $\II(p^j,g)$ by \ref{jn2-classification}.

    \underline{Proof of claims}:
    Let $d = |\sigma|$. By \Grel{R1}, $\sigma$ is central so $d \mid p^j$. But $p = |G'| = |\sigma^2|$ so $d \mid 2p$. Thus either $p$ is odd and $p = d$, or $p = 2$ and $d = 4$.

    \begin{enumerate}
    \item
        If $p$ is odd then $p^j \neq 2$. If $p = 2$ then $p^j \geq d = 4$ so $p^j \neq 2$.
    \item
        We will show that $\dim V = 2g$ by proving that 
        $$\sB = \{a_r \mod ZG, b_r \mod ZG\}_{r=1}^g$$
        is a basis of $V$. Every element $x \in G$ can be written uniquely in the form
        $$x = \sigma^k a_1^{i_1} \cdots a_g^{i_g} b_1^{j_1} \cdots b_g^{j_g}$$
        using commuting relations \Grel{R1}, \Grel{R3}, \Grel{R4} so $\sB$ is a generating set. To prove that $\sB$ is linearly independent, let
        $$y = a_1^{i_1} \cdots a_g^{i_g} b_1^{j_1} \cdots b_g^{j_g} \in G$$
        and suppose that $y = 0 \mod ZG$, which is to suppose that an arbitrary linear combination of elements of $\sB$ is trivial in $V$. Then $y$ is central so
        $$[y,b_1] = [a_1^{i_1},b_1] = \sigma^{-2i_1} = 1$$
        which implies that $d \mid 2 i_1$ and thus $i_1 = 0 \mod p$: If $p$ is odd then $d = p$ so $p \mid i_1$. If $p = 2$ then $d = 4 \mid 2 i_1$ so $p = 2 \mid i_1$.

        Similarly $i_r = j_r = 0 \mod p$ for all $r$, which is to say that all coefficients of the linear combination are trivial over the base field $\FF_p$. This proves the linear independence of $\sB$.
    \item
        The relation \Grel{TR} imposes the relation $d \mid 2(g+n-1)$. Either $d = p$ is odd or $d = 4$ and $p = 2$; in both cases \Grel{TR} implies that $p \mid (g+n-1)$. \qedhere
    \end{enumerate}
\end{proof}

\begin{proof}[Proof of \ref{no-surjections}]
    Let $n \geq 5$ and $m \geq 3$, and let $g,h \geq 0$. If there is a surjection $B_n(\Sigma_g) \to B_m(\Sigma_h)$ then the composition $B_n(\Sigma_g) \to B_m(\Sigma_h) \to S_m$ is also surjective. Since $S_m$ is not nilpotent when $m \geq 3$, we must have $m \geq n$.
\end{proof}

\begin{remark}[Punctured surfaces, surfaces with boundary]
    In his paper \cite{bellingeri_presentations_2004}, Bellingeri also gives a presentation of the braid group of a genus $g$ surface with $m$ punctures (equivalently for the purposes of braid groups, $m$ boundary components). The above methods can be used nearly verbatim to prove that the smallest nonabelian quotient of $B_n(\Sigma_{g,m})$ is the smaller of $S_n$ or $\I(2^2,g)$ and $\II(2^2,g)$.

\end{remark}

{\small\bibliography{025/025}}

\begin{thebibliography}{1}

\bibitem{bellingeri_presentations_2004}
P.~Bellingeri.
\newblock On presentations of surface braid groups.
\newblock {\em Journal of Algebra}, 274(2):543--563, Apr. 2004.

\bibitem{birman_braid_1969}
J.~S. Birman.
\newblock On braid groups.
\newblock {\em Communications on Pure and Applied Mathematics}, 22(1):41--72,
  Jan. 1969.

\bibitem{chen_surjective_2019}
L.~Chen.
\newblock Surjective homomorphisms between surface braid groups.
\newblock {\em Israel Journal of Mathematics}, 232(1):483--500, Aug. 2019.

\bibitem{fadell_braid_1961}
E.~Fadell and J.~Van~Buskirk.
\newblock On the braid groups of {$E^2$} and {$S^2$}.
\newblock {\em Bulletin of the American Mathematical Society}, 67(2):211--213,
  1961.

\bibitem{kolay_smallest_2021}
S.~Kolay.
\newblock Smallest non-cyclic quotients of braid and mapping class groups, Oct.
  2021.
\newblock arXiv: 2110.02162.

\bibitem{newman_class_1960}
M.~F. Newman.
\newblock On a {Class} of {Nilpotent} {Groups}.
\newblock {\em Proceedings of the London Mathematical Society},
  s3-10(1):365--375, 1960.

\end{thebibliography}

\end{document}